\documentclass[twoside, 12pt]{article}
\usepackage[russian, english]{babel}
\usepackage{epsfig}
\usepackage{amssymb,amsmath,amsfonts,soul,amsthm,enumerate}
\usepackage{amsfonts}
\usepackage{amsmath}
\usepackage{graphicx}
\usepackage{amsthm}
\usepackage[cp1251]{inputenc}
\usepackage[T2A]{fontenc}
\usepackage{mathrsfs}
\usepackage[english]{babel}
\textwidth=155mm
\textheight=230mm
\voffset=-15mm
\oddsidemargin=12,1mm%%%%%%%%%%%%%%%%%%%%%%%%% ????? ???? =42,5??, ?????? ???? =27,5??
\evensidemargin=-9,1mm

\newtheorem{theorem}{Theorem}[section]
\newtheorem{lemma}{Lemma}[section]

\numberwithin{equation}{section}
 \pagestyle{myheadings}
 \def\@evenhead{\vbox{\hbox to \textwidth{\thepage\hfil\sl\leftmark\strut}\hrule}}
 \def\@oddhead{\vbox{\hbox to \textwidth{\rightmark\hfill\thepage\strut}\hrule}}

%%%%%%%%%%%%%%%%%%%%%%%%%%%%%%%%%%%%%%%%%%%%%%%%%%%%%%%%%%%%%%%%%%%%%%%%%%%%%%%%%%%%%%%%%%%%%%%%%%%%

%%%\usepackage[pdftex,unicode,colorlinks,linkcolor=blue,
%%%%citecolor=red,bookmarksopen,pdfhighlight=/N]{hyperref}

\usepackage{textcomp}
\usepackage{upgreek}

\theoremstyle{plain} 
\newtheorem{proposition}{Proposition}%%[section] 

\newtheorem{PJf}{Classical Poisson\,--\,Jensen formula}

\theoremstyle{definition}

\usepackage{latexsym,amssymb,amsmath}
\usepackage{amsfonts,mathrsfs,amscd}
\usepackage{verbatim}
\usepackage[mathscr]{eucal}

\newcommand{\RR}{\mathbb{R}} 
 
\newcommand{\NN}{\mathbb{N}}

\newcommand{\pt}{{\rm pt}} 
\newcommand{\dd}{\,{\rm d}}

\DeclareMathOperator{\clos}{clos} 
\DeclareMathOperator{\Int}{int}
\DeclareMathOperator{\Meas}{Meas}

\DeclareMathOperator{\har}{har}

\DeclareMathOperator{\comp}{cmp}
 
\DeclareMathOperator{\sbh}{sbh}

\DeclareMathOperator{\supp}{supp}

\DeclareMathOperator{\Borel}{Bor}

\DeclareMathOperator{\Dom}{Dom}

\DeclareMathOperator{\Conn}{Conn}

\setcounter{tocdepth}{2}

\textheight=215mm % высота текста
\textwidth=155mm % ширина текста
\topmargin=-1.5cm % отступ от верхнего края
\parindent=24pt % абзацный отступ
\parskip=0pt % интервал между абзацами
\tolerance=2000 % терпимость к "жидким" строкам
\flushbottom % выравнивание высоты страниц

\begin{document}
\sloppy

\centerline{\bf UNIQUENESS THEOREMS FOR SUBHARMONIC FUNCTIONS}     % Title of talk

\vskip 0.3cm

\centerline{\bf B. N. Khabibullin}        % Authors from the same institution

\markboth{\hfill{\footnotesize\rm   B. N. Khabibullin}\hfill}
{\hfill{\footnotesize\sl  Uniqueness theorems for subharmonic functions}\hfill}
\vskip 0.3cm

\vskip 0.7 cm

\noindent {\bf Key words:}  subharmonic function, potential, Riesz measure, Green's function, harmonic measure, Poisson\,--\,Jensen formula
\vskip 0.2cm

\noindent {\bf AMS Mathematics Subject Classification:} 31B05, 31A05, 31B15, 31A15, 26A51

\vskip 0.2cm

\noindent {\bf Abstract.} It is shown that harmonic functions on some subsets, subharmonic and coinciding everywhere outside of these sets, actually coincide everywhere.  
Our main result is Theorem \ref{lemPQ}.

\section{Introduction}\label{Int}

\subsection{ On the classical Poisson-Jensen formula}

Let $D$\/ be a {\it bounded domain\/} in the $d$-dimensional Euclidean space  $\RR^d$ with the {\it closure\/} $\clos D$ in $\RR^d$ and the {\it boundary\/} $\partial D$ in $\RR^d$. Then, for any $x\in D$  there are the {\it extended harmonic measure $\omega_D(x, \cdot)$ for  $D$ at $x\in D$\/} 
as a Borel probability measure on $\RR^d$ with support on
$\partial D$ and the {\it generalized Green's function $g_D(\cdot , x )$  for $D$ with pole at $x$\/}  extended by zero values on  the complement $\RR^d\setminus  \clos D$ and by the upper semicontinuous regularization on $\partial D$ from $D$ \cite{HK}, \cite{AG}, \cite{R}, \cite{Helms}, \cite{Doob}, \cite{Landkoff} (see also \eqref{oB} and \eqref{gD} in Subsec.~\ref{ccPJ} below). 

Let $u\not\equiv -\infty$ be  a {\it subharmonic function on\/}  $\clos D$, i.e., on an open set containing $\clos D$,  with its {\it Riesz measure\/} $\varDelta_u$ on this open set (see in detail \S\S~\ref{Sssm}--\ref{Sssf}, \eqref{df:cm}).

\begin{PJf}[{\rm \cite[Theorem 5.27]{HK}, \cite[4.5]{R}}] 
\begin{equation}\label{clasPJ}
u(x)=\int_{\partial D} u\dd \omega_D(x,\cdot)-\int_{\clos D} g_D(\cdot,x)\dd \varDelta_u \quad\text{for each $x\in D$.} 
\end{equation}
\end{PJf}

 For $s\in \RR$, we set  
\begin{subequations}\label{kK}
\begin{align}
k_s(t)& := \begin{cases}
\ln t  &\text{ if $s=0$},\\
%% -\sgn (s)  
-\frac{s}{|s|}
t^{-s} &\text{ if $s\in \RR\setminus 0$,} %%%ОПЕЧАТКА \sgn
\end{cases}
\qquad  t\in \RR^+\setminus 0,
\tag{\ref{kK}k}\label{{kK}k}
\\
K_{d-2}(y,x)&:=\begin{cases}
k_{d-2}\bigl(|y-x|\bigr)  &\text{ if $y\neq x$},\\
 -\infty &\text{ if $y=x$ and $d\geq 2$},\\
0 &\text{ if $y=x$ and  $d=1$},\\
\end{cases}
\quad  (y,x) \in \RR^d\times \RR^d.
\tag{\ref{kK}K}\label{{kK}K}
\end{align}
\end{subequations}
The following functions 
\begin{equation}\label{pqKo}
p\colon y\underset{\text{\tiny $y\in \RR^d$}}{\longmapsto}  g_D(y,x)+K_{d-2}(y,x), 
\quad   q\colon y\underset{\text{\tiny $y\in \RR^d$}}{\longmapsto} K_{d-2}(y,x)  
\end{equation}
are subharmonic with Riesz probability measures 
$\varDelta_p=\omega_D(x, \cdot)$ and $\varDelta_q=\delta_x$,
where $\delta_x$ is the  Dirac measure at $x\in D$: $\delta_x\bigl(\{x\}\bigr)=1$.
The following symmetric equivalent form of the classical Poisson\,--\,Jensen formula \eqref{clasPJ}  immediately follows  from the suitable definitions of  harmonic measures and Green's functions and is briefly discussed in  Subsec.~\ref{ccPJ}.

\section{ Basic notation, definitions, and conventions}\label{Ss11}

 The reader can skip this Subsec. \ref{Ss11}
and return to it only if necessary.

\subsection{Sets, topology, order}
We denote by $\NN:=\{1,2,\dots\}$, $\RR$, and $\RR^+:=\{x\in \RR\colon x\geq 0\}$  the sets of {\it natural,\/} of {\it real,\/} and  of {\it positive\/} numbers, each endowed with its natural order ($\leq$, $\sup/\inf$), algebraic, geometric  and topological structure.  We denote singleton sets by a symbol without curly brackets. So, $\NN_0:=\{0\}\cup \NN=:0\cup \NN$, and  $\RR^+\setminus 0:=\RR^+\setminus \{0\}$ is the set of {\it strictly positive\/} numbers, etc. The {\it extended real line\/}  $\overline \RR:=-\infty\sqcup\RR\sqcup+\infty$ is the order completion of $\RR$ by the  {\it disjoint union\/} $\sqcup$  with $+\infty:=\sup \RR$ and $-\infty:=\inf \RR$ equipped with the order topology with two  ends $\pm\infty$, $\overline \RR^+:=\RR^+\sqcup+\infty$;  $\inf \varnothing :=+\infty$, $\sup \varnothing :=-\infty$ for the {\it empty set\/} $\varnothing$ etc. 
The same symbol $0$ is also used, depending on the context, to denote  zero vector, zero function, zero measure, etc.

We denote by $\RR^d$ the  {\it Euclidean space of $d\in \NN$ dimensions\/}  with the  {\it Euclidean norm\/} $|x|:=\sqrt{x_1^2+\dots+x_d^2}$ of $x=(x_1,\dots ,x_d)\in \RR^d$,  and  we denote  by $\RR^d_{\infty}:=\RR^d\sqcup\infty$
 the {\it  Alexandroff\/} (Aleksandrov) 
{\it one-point compactification\/}   of $\RR^d$
obtained by adding one extra point $\infty$. For a subset $S\subset \RR^d_{\infty}$ or a subset $S\subset \RR^d$ we let $\complement S :=\RR^d_{\infty}\setminus S$, $\clos S$, $\Int S:=\complement (\clos \complement  S)$, and $\partial S:=\clos S\setminus \Int S$ denote its
 {\it complement,\/} {\it closure,} {\it interior,} and {\it boundary\/}  always in $\RR^d_{\infty}$, and $S$ is equipped with the topology induced from $\RR^d_{\infty}$. If $S'$ is a relative compact subset in $S$, i.e., $\clos S'\subset S$,  then we write $S'\Subset S$.  We denote by 
 $B(x,t):=\{y\in \RR^d\colon |y-x|< t\}$, $\overline B(x,t):=\{y\in \RR^d\colon |y-x|\leq  t\}$, $\partial \overline B(x,t):=\overline B(x,t)\setminus   B(x,t)$  an {\it open ball,\/} a {\it closed ball,\/} %% ОПЕЧАТКА a добавить?
a {\it sphere %%%ОПЕЧАТКА circle sphere
of radius $t\in \RR^+$ centered at $x\in \RR^d$}, respectively. 

Let $T$ be a topological space, and $S$ be a subset in $T$.   
We denote by $\Conn_T S$ or $\Conn_T (S)$ the  set of all connected components of $S\subset T$ in $T$. 

\underline{Throughout this paper} $O\neq \varnothing$ will denote  an  {\it open subset  in\/ $\RR^d$},   and $D\neq \varnothing$ is a  {\it domain in\/ $\RR^d$,\/}  i.e., an open connected subset in $\RR^d$. 

\subsection{Measures and charges}\label{Sssm}

The convex cone over $\RR^+$ of all Borel, or Radon,  positive measures $\mu\geq 0$  on the $\sigma$-algebra $\Borel (S)$ of all {\it Borel subsets\/} of $S$ is denoted by $\Meas^+(S)$; $\Meas^+_{\comp}(S)\subset \Meas^+(S)$ is the subcone of $\mu\in \Meas^+(S)$ with compact  {\it support\/} $\supp \mu$ in $S$, $\Meas(S):=\Meas^+(S)-\Meas^+(S)$ is the vector lattice over $\RR$ of {\it charges,\/} or signed measures, on $S$, $\Meas^{+1}(S)
$ is the convex set of {\it probability\/} measures on $S$, 
$\Meas_{\comp}^{1+}(S):=\Meas^{1+}(S)\cap \Meas_{\comp}(S)$,
and   $\Meas_{\comp}(S):=\Meas^+_{\comp}(S)-\Meas^+_{\comp}(S)$.
 For a charge $\mu \in \Meas(S)$, we let
$\mu^+:=\sup\{0,\mu\}$, $\mu^-:=(-\mu)^+$ and $\mu:= \mu^++\mu^-$ respectively denote its {\it upper,} {\it lower,} and {\it total variations,} and $\mu(x,t):=\mu\bigl( \overline B(x,t)\bigr)$. 

For an {\it extended numerical  function\/} $f\colon S\to \overline \RR$ we allow values $\pm\infty$ for Lebesgue integrals \cite[Ch.~3, Definiftion 3.3.2]{HK} (see also \cite{Bourbaki})
\begin{equation}\label{int}
\int_Sf\dd \mu\in \overline \RR, \quad \mu \in \Meas^+(S),
\end{equation}
and we say that $f$ is {\it $\mu$-integrable on\/} $S$ if  the integral in \eqref{int} is finite.

\subsection{Subharmonic functions}\label{Sssf}

We denote  by $\sbh (O)$  the convex cone over $\RR^+$  of all   {\it subharmonic\/} (locally convex if $d = 1$) functions on $O$, including functions that are identically equal to $-\infty$ on some components $C\in \Conn_{\RR^d_{\infty}}(O)$. Thus, $\har(O):=\sbh(O)\cap \bigl(-\sbh(O)\bigr)$
is the vector space over $\RR$ of all {\it harmonic\/} (locally affine if $d = 1$) functions on $O$.
Each function \begin{equation*}
u\in \sbh_*(  O):=\bigl\{u\in \sbh( O)\colon u\not\equiv-\infty 
\text{ on each }C\in \Conn_{\RR^d_{\infty}}(O)\bigr\}
\end{equation*}
is associated with its {\it Riesz measure\/}
\begin{equation}\label{df:cm}
\varDelta_u:= c_d {\bigtriangleup}  u\in \Meas^+( O), 
\quad c_d:=\frac{\Gamma(d/2)}{2\pi^{d/2}\max \{1, d-2\bigr\}}, 
\end{equation}
where ${\bigtriangleup}$  is  the {\it Laplace operator\/}  acting in the sense of the  theory of distribution or generalized functions, and 
$\Gamma$ is the gamma function. If $u\equiv -\infty$ on $C\in \Conn_{\RR^d_{\infty}}(O)$, then  we set $\varDelta_{-\infty}(S):=+\infty$ for each  $S\subset C$. Given $S\subset \RR^d$, we set 
\begin{equation*}
\begin{split}
\text{Sbh}(S)&:=\bigcup \Bigl\{\sbh(O')\colon 
S\subset O'\overset{\text{\tiny open}}{=}\Int O'\subset \RR^d\Bigr\},
\\
\text{Sbh}_*(S)&:=\bigcup \Bigl\{\sbh_*(O')\colon 
S\subset O'\overset{\text{\tiny open}}{=}\Int O'\subset \RR^d\Bigr\},
\\
\text{Har}_*(S)&:=\bigcup \Bigl\{\har(O')\colon 
S\subset O'\overset{\text{\tiny open}}{=}\Int O'\subset \RR^d\Bigr\}.
\end{split}
\end{equation*}
Consider a  binary relation $\cong\,\subset  \text{Sbh}(S)\times \text{Sbh}(S)$
on $\text{Sbh}(S)$ defined by the rule: $U\cong V$ if {\it there is an open set $O'\supset S$ in\/ $\RR^d$ such that $U\in \sbh (O')$, $V\in \sbh(O')$, and $U(x)=V(x)$ for each $x\in O'$.}  This relation  $\cong$ is an {\it equivalence relation\/} on  $\text{Sbh}(S)$, on $\text{Sbh}_*(S)$, and on $\text{Har}(S)$.
The quotient sets of $\text{Sbh}(S)$, of $\text{Sbh}_*(S)$, and of $\text{Har}(S)$ 
by $\cong$  are denoted below by 
$\sbh(S):=\text{Sbh}(S)/\cong$,  $\sbh_*(S):=\text{Sbh}_*(S)/\cong$,
and $\har(S):=\text{Har}(S)/\cong$,  respectively.  The equivalence class $[u]$ of $u$ is denoted without square brackets as simply $u$, and we do not distinguish between the equivalence class $[u]$ and the function $u$ when possible. 
So, for $u,v\in \sbh(S)$, we write {\it``$u=v$ on $S$''} if $[u]=[v]$ in $\sbh(S):=\text{Sbh}(S)/\cong$, or, equivalently, $u\cong v$ on $\text{Sbh}(S)$,
and  we write $u\not\equiv-\infty$ if $u\in \sbh_*(S)$.
The concept of the Riesz measure $\varDelta_u$ of $u\in \sbh(S)$ is correctly and uniquely defined by the restriction $\varDelta_u\bigm|_S$ of the Riesz measure $\varDelta_u$ to $S$.  For $u\in \sbh(S)$ and $v\in \sbh(S)$, the concepts {\it ``$u\leq v$ on $S$'',\/ {\rm and}  ``$u=v$ outside $S$'', ``$u\leq v$ outside $S$'', ``$u$ is harmonic outside S'',\/} etc. defined naturally:
$u(x)\leq v(x)$ {\it for each} $x\in S$, and  {\it there exits an open set 
$O'\supset S$} {\rm such that } $u(x)=v(x)$ {\it for each $x\in O'\setminus S$}, $u(x)\leq v(x)$ {\it for each $x\in O'\setminus S$}, {\it the restriction $u\bigm|_{O'\setminus S}$ is harmonic on 
$O'\setminus S$,} respectively.

\subsection{Potentials}\label{Pjhar}
For a {\it charge\/} $\mu\in \Meas_{\comp}(O)$ its  {\it potential\/}  
\begin{equation}\label{pot} 
\pt_{\mu}\colon \RR^d_{\infty}\to \overline \RR, \quad \pt_{\mu}(y)
\overset{\eqref{kK}}{:=}\int_O K_{d-2}(x,y) \dd \mu (x), 
\end{equation}
is uniquely determined on \cite{Arsove}, \cite[3.1]{KhaRoz18}
\begin{equation}\label{Dom}
\Dom \pt_{\mu} 
:=\left\{y\in \RR^d\colon
\inf\left\{ \int_{0}^1\frac{\mu^-(y,t)}{t^{d-1}} \dd t, \int_{0}^1\frac{\mu^+(y,t)}{t^{d-1}} \dd t\right\}<+ \infty 
\right\}
\end{equation}
by values in  $\overline \RR$, 
and  the set $E:=(\complement \Dom \pt_{\mu})\setminus \infty$ is {\it polar\/} with zero {\it outer capacity\/} 
\begin{equation*}
\text{Cap}^*(E):=\inf_{E\subset O'\overset{\text{\rm \tiny open}}{=}\Int O'}  
\sup_{\stackrel{C\overset{\text{\tiny closed}}{=}\clos C\overset{\text{\tiny compact}}{\Subset} O}{\nu\in \Meas^{1+}(C)}} 
 k_{d-2}^{-1}\left(\iint K_{d-2} (x,y)\dd \nu (x) \dd \nu(y) \right).
\end{equation*}
Evidently $\pt_{\mu}\in \har\bigl(\RR^d\setminus \supp |\mu|\bigr)$, and if $\mu \in \Meas^+_{\comp}(\RR^d)$, then $\pt_{\mu}\in \sbh_*(\RR^d)$.

\subsection{ In detail on the classical Poisson\,--\,Jensen formula}\label{ccPJ}

If  $x\in D\Subset O$, then  
the {\it extended harmonic measure  $\omega_D(x, \cdot )\in \Meas^{1+}(\partial D)\subset \Meas^{1+}_{\comp}(\RR^d)$ (for $D$ at\/} $x$)  defined  on sets $B\in \Borel(\RR^d)$ by
\begin{equation}\label{oB}
\omega_D(x, B):=\sup\left\{u(x)\colon u\in \sbh(D),\; \limsup_{D\ni y'\to y\in \partial D } u(y')\leq 
\begin{cases}
1\text{ for $y\in B\cap \partial D$}\\
0\text{ for $y\notin B\cap \partial D$}
\end{cases} \hspace{-2mm}
\right\}
\end{equation}

the potential (see \cite[Ch.~4,\S~1,2]{Landkoff}) 
\begin{multline}\label{Dog}
\pt_{\omega_D(x, \cdot )-\delta_x}(y) =
\pt_{\omega_D(x, \cdot)}(y)-\pt_{\delta_x}(y) 
\\=\int_{\partial D} K_{d-2}(y,x') \dd_{x'} \omega_D(x, x')-K_{d-2}(y,x )
=g_D(y,x), \quad y\in \RR^d_{\infty}, \quad x\in D,  
\end{multline}
is equal to  the {\it generalized Green's function $g_D(\cdot,x)\colon \RR^d_{\infty}\to \overline \RR^+$ (for $D$ with pole at $x$ and  $g_D(x,x):=+\infty$)\/}
defined on $\RR^d_{\infty}\setminus x$ by upper semicontinuous regularization
 \begin{equation}\label{gD}
\begin{split}
g_D(y ,x)&:=\check{g}^*(y,x)
:=\limsup_{\RR^d\ni y'\to y} \check{g}(y',x)\in \overline{\RR}^+ \quad 
\text{for each $y\in \RR^d_{\infty}\setminus x$, where} \\
\check{g}(y,x)&:=\sup\left\{u(y)\colon
u\in \sbh(\RR^d\setminus x), \;
\begin{cases}
u(y')\leq 0 \text{ for each  }y\notin \clos D, \\
\limsup\limits_{x\neq y\to x}\dfrac{u(y)}{-K_{d-2}(x,y)}\leq 1
\end{cases} 
\right\}. 
\end{split}
\end{equation}

%%\section{Proof of the Main Lemma}\label{PML}
%%%\setcounter{equation}{0}

\section{%%\sf 
Representations for pairs of subharmonic functions}\label{RSF}

\begin{proposition}\label{prO} If\/ $\mu \in \Meas_{\comp}(\RR^d)$, then
\begin{subequations}\label{pnu0}
\begin{align}
{\pt}_{\mu}&\in \sbh(\RR^d)\bigcap \har(\RR^d\setminus \supp \mu), 
\tag{\ref{pnu0}h}\label{{pnu0}h}
\\ 
{\pt}_{\mu}(x)&\overset{ \eqref{{kK}k}}{=}\mu (\RR^d)k_{d-2}\bigl(|x|\bigr)+O\bigl(1/|x|^{d-1}\bigr),  
\quad x\to \infty.
\tag{\ref{pnu0}$\infty$}\label{{pnu0}infty}
\end{align}
\end{subequations}
\end{proposition}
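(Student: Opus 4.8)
The plan is to treat the two assertions separately: the membership and harmonicity statement is essentially bookkeeping built on facts already recorded in the excerpt, while the asymptotic expansion carries the genuine analytic content.

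For the first assertion I would use the Jordan decomposition $\mu=\mu^+-\mu^-$ with $\mu^\pm\in\Meas^+_{\comp}(\RR^d)$ and $\supp\mu=\supp\mu^+\cup\supp\mu^-$. By the remark closing Subsection~\ref{Pjhar}, $\pt_\nu\in\sbh_*(\RR^d)$ for every $\nu\in\Meas^+_{\comp}(\RR^d)$ and $\pt_\nu$ is harmonic off $\supp\nu$; applied to $\mu^+$ and $\mu^-$ this gives the membership claim, and since $\pt_\mu=\pt_{\mu^+}-\pt_{\mu^-}$ is a difference of functions harmonic on $\RR^d\setminus\supp\mu$ it is itself harmonic there. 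For a self-contained proof of the harmonicity one fixes a closed ball $\overline B(y_0,r)\Subset\RR^d\setminus\supp\mu$; then $|x-y|\ge\varepsilon>0$ for all $x\in\supp\mu$ and $y\in\overline B(y_0,r)$, so $y\mapsto K_{d-2}(x,y)$ and its $y$-derivatives are bounded uniformly in $x\in\supp\mu$. Differentiation under the integral sign in \eqref{pot} is then legitimate, and $\bigtriangleup_y K_{d-2}(x,y)=0$ for $x\ne y$ yields $\bigtriangleup\pt_\mu\equiv0$ on that ball.

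The substance is the asymptotic expansion. Fix $R$ with $\supp\mu\subset\overline B(0,R)$, take $|x|>R$, and write $e:=x/|x|$. Starting from the identity $|x-z|^2=|x|^2\bigl(1-2\langle e,z\rangle/|x|+|z|^2/|x|^2\bigr)$ I would expand the kernel to obtain, uniformly for $z\in\supp\mu$,
\begin{equation*}
k_{d-2}\bigl(|x-z|\bigr)=k_{d-2}\bigl(|x|\bigr)+O\bigl(1/|x|^{d-1}\bigr),
\end{equation*}
checking the three regimes of \eqref{kK} separately: a Taylor expansion of $t\mapsto-t^{-(d-2)}$ for $d\ge3$, of $\ln$ for $d=2$, and the exact identity $|x-z|=|x|-\langle e,z\rangle$ (so $k_{-1}(|x-z|)=|x|-\langle e,z\rangle$) for $d=1$. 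Integrating against $\dd\mu$ and using $\mu(\RR^d)=\int\dd\mu$ then gives
\begin{equation*}
\pt_\mu(x)=\int K_{d-2}(z,x)\dd\mu(z)=\mu(\RR^d)\,k_{d-2}\bigl(|x|\bigr)+O\bigl(1/|x|^{d-1}\bigr),
\end{equation*}
which is the claimed expansion.

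The point requiring care — and the main obstacle — is that the remainder must be uniform in the direction $e=x/|x|$ and must then survive integration against the total variation $|\mu|$. Since $|\langle e,z\rangle|\le|z|\le R$ on $\supp\mu$, the leading correction is bounded uniformly in $e$ by a constant multiple of $1/|x|^{d-1}$ in every case: for $d\ge3$ the prefactor $|x|^{-(d-2)}$ multiplies a quantity of size $1/|x|$; for $d=2$ it is the term $\langle e,z\rangle/|x|$; and for $d=1$ it is the bounded quantity $\langle e,z\rangle$, matched by $1/|x|^{d-1}=1$. The next term is of strictly smaller order, and because $|\mu|(\RR^d)<\infty$ the integration of these uniform bounds preserves the order $O\bigl(1/|x|^{d-1}\bigr)$. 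Aligning the three regimes of $k_{d-2}$ so that exactly this single error order emerges in all dimensions is the only bookkeeping that must be done with care.
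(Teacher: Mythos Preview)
Your treatment of the asymptotic \eqref{{pnu0}infty} is correct and follows essentially the same route as the paper: both estimate $k_{d-2}\bigl(|x-z|\bigr)-k_{d-2}\bigl(|x|\bigr)$ directly and then integrate against $|\mu|$. The paper handles $d=1$ by the bare triangle inequality $\bigl||x-y|-|x|\bigr|\le|y|$, cites \cite[Theorem~3.1.2]{R} for $d=2$, and for $d>2$ works with explicit algebraic bounds on $|x-y|^{-(d-2)}-|x|^{-(d-2)}$ rather than a Taylor expansion; your unified presentation via $e=x/|x|$ is a mild stylistic tidying, not a different argument.

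One caveat on \eqref{{pnu0}h}: the Jordan decomposition only gives $\pt_\mu=\pt_{\mu^+}-\pt_{\mu^-}$ as a \emph{difference} of subharmonic functions, which is $\delta$-subharmonic but not in general a member of $\sbh(\RR^d)$, so the clause ``this gives the membership claim'' does not actually deliver $\pt_\mu\in\sbh(\RR^d)$. The paper's own proof addresses only \eqref{{pnu0}infty} and says nothing about \eqref{{pnu0}h}, so this is a wrinkle in the statement itself rather than a defect specific to your argument; the harmonicity on $\RR^d\setminus\supp\mu$, which is the part that is actually used later, you justify correctly.
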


\begin{proof} For $d=1$, we have
\begin{equation*}
\left|\pt_{\mu}(x)-\mu(\RR)|x|\right|\leq \int \bigl||x-y|-|x|\bigr|\dd |\mu|(y)\leq \int |y|\dd |\mu|(y)
=O(1), \quad |x|\to +\infty. 
\end{equation*} 
 
See \cite[Theorem 3.1.2]{R} for $d=2$. 

For $d>2$ and $|x|\geq 2\sup\bigl\{|y|\colon y\in \supp \mu\bigr\}$, we have 
\begin{multline*}
\left|{\pt}_{\mu}(x)-\mu (\RR^d)k_{d-2}\bigl(|x|\bigr)\right|
=\left|\int \left(\frac{1}{|x|^{d-2}}-\frac{1}{|x-y|^{d-2}}\right)
\dd \mu (y)\right|\\
\leq\int \left|\frac{1}{|x|^{d-2}}-\frac{1}{|x-y|^{d-2}}\right|
\dd |\mu|(y)
\leq \frac{2^{d-2}}{|x|^{2d-4}}\int \left||x-y|^{d-2}
-|x|^{d-2}\right| \dd |\mu|(y)
\\
\leq \frac{2^{d-2}}{|x|^{2d-4}}
\int |y||x|^{d-3} \sum_{k=0}^{d-3}\Bigl(\frac32\Bigr)^k 
\dd |\mu|(y)\leq 2\frac{3^{d-2}}{|x|^{d-1}}
\int |y|\dd |\mu|(y)=O\Bigl(\frac{1}{|x|^{d-1}}\Bigr),
\end{multline*}
and we get \eqref{{pnu0}infty}.
\end{proof}

\begin{theorem}\label{lemPQ} Let $O\subset \RR^d$ be an open set, and let  $p\in \sbh_*(O)$ and $q\in \sbh_*(O)$ be pair of functions such that $p$ and $q$ are harmonic outside a compact subset in $O$. If there is a compact set $S\Subset O$ such that $p=q$ on $O\setminus S$, then, for Riesz measures $\varDelta_p\in \Meas_{\comp}^+(O)$ of $p$ and  $\varDelta_q\in \Meas_{\comp}^+(O)$ of $q$, we have 
\begin{equation}\label{ptd}
\varDelta_p(O)=\varDelta_q(O), 
\quad \text{$\pt_{\varDelta_p}=\pt_{\varDelta_q}$ on $\RR^d\setminus S$},
\end{equation}
and there is a harmonic function $H$ on $O$  such that 
\begin{equation}\label{PQpR}
\begin{cases}
p=\pt_{\varDelta_p}+H\\
q=\pt_{\varDelta_q}+H
\end{cases}
\quad \text{on $O$, \quad  $H\in \har(O)$}.
\end{equation}
\end{theorem}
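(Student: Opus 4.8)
The plan is to reduce all three assertions to the vanishing of a single harmonic function defined on all of $\RR^d$. By Proposition~\ref{prO} the potentials $\pt_{\varDelta_p},\pt_{\varDelta_q}\in\sbh(\RR^d)$ carry the Riesz measures $\varDelta_p,\varDelta_q$, which are also the Riesz measures of $p,q$ on $O$; hence $H_p:=p-\pt_{\varDelta_p}$ and $H_q:=q-\pt_{\varDelta_q}$ have vanishing distributional Laplacian on $O$, so $H_p,H_q\in\har(O)$ (this is the Riesz decomposition, legitimate because $p,q\in\sbh_*(O)$). This already puts $p$ and $q$ in the shape of \eqref{PQpR}, and the theorem reduces to proving $H_p=H_q$ on $O$ together with the two relations in \eqref{ptd}.

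Next I set $\mu:=\varDelta_p-\varDelta_q\in\Meas_{\comp}(\RR^d)$ and $G:=H_p-H_q\in\har(O)$. Since $p=q$ holds on an open neighborhood of $O\setminus S$, the measures $\varDelta_p$ and $\varDelta_q$ coincide there, so $\supp\mu\subset S$, and therefore $\pt_\mu=\pt_{\varDelta_p}-\pt_{\varDelta_q}\in\har(\RR^d\setminus S)$ by Proposition~\ref{prO}. Subtracting the two decompositions on $O\setminus S$, where $p-q=0$, yields the identity $\pt_\mu=-G$ on $O\setminus S$.

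The crux is a gluing step. I define $\Phi\colon\RR^d\to\RR$ by $\Phi:=\pt_\mu$ on $\RR^d\setminus S$ and $\Phi:=-G$ on $O$. The two prescriptions agree on the overlap $O\setminus S$, and because $S\Subset O$ the open sets $\RR^d\setminus S$ and $O$ cover $\RR^d$; as harmonicity is a local property, $\Phi$ is harmonic on all of $\RR^d$. By \eqref{{pnu0}infty} we have $\Phi(x)=\mu(\RR^d)k_{d-2}(|x|)+O(|x|^{1-d})$ as $x\to\infty$. For $d\geq2$ the right-hand side is $o(|x|)$, so the interior gradient estimate for harmonic functions forces $\Phi\equiv\const$; comparing this constant with the asymptotics then forces $\mu(\RR^d)=0$ and $\Phi\equiv0$ (for $d\geq3$ since $k_{d-2}(|x|)\to0$, for $d=2$ since a nonzero coefficient of $\ln|x|$ would contradict boundedness). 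For $d=1$ the function $\Phi$ is affine while $\pt_\mu(x)=\mu(\RR)|x|+O(1)$, and comparing the behavior as $x\to+\infty$ and as $x\to-\infty$ again yields $\mu(\RR)=0$ and $\Phi\equiv0$.

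The identity $\Phi\equiv0$ then unwinds into exactly the three claims: $\mu(\RR^d)=0$ is $\varDelta_p(O)=\varDelta_q(O)$; $\pt_{\varDelta_p}-\pt_{\varDelta_q}=\pt_\mu=\Phi=0$ on $\RR^d\setminus S$ is the potential identity in \eqref{ptd}; and $G=-\Phi=0$ on $O$ gives $H:=H_p=H_q\in\har(O)$ with the representation \eqref{PQpR}. I expect the main obstacle to be precisely this gluing/Liouville step: verifying that the two local definitions of $\Phi$ are genuinely consistent and that $\RR^d\setminus S$ and $O$ cover $\RR^d$ (which is where the hypothesis $S\Subset O$ is essential), and then disentangling the dimension-dependent growth of $\pt_\mu$, with the case $d=1$ (linear growth rather than decay at infinity) needing separate treatment from $d\geq2$.
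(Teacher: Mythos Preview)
Your argument is correct and takes a genuinely shorter route than the paper's. Both proofs open with the Riesz decomposition $p=\pt_{\varDelta_p}+H_p$, $q=\pt_{\varDelta_q}+H_q$ and reduce everything to showing $H_p-H_q=0$. The paper then proceeds in two stages: it first treats the special case $O=\RR^d$ by Liouville's theorem (with dimension-by-dimension analysis), and afterwards reduces the general open set $O$ to this case by invoking Arsove's canonical representation of the $\delta$-subharmonic function $w=p-q$, extended by zero to $\RR^d$, as a difference $P-Q$ of global subharmonic functions on $\RR^d$. Your gluing step sidesteps this machinery entirely: since $\supp(\varDelta_p-\varDelta_q)\subset S$, the potential $\pt_\mu$ is already harmonic on $\RR^d\setminus S$, and it agrees with $-G$ on the overlap $O\setminus S$ (strictly speaking the identity $\pt_\mu=\pt_{\varDelta_p}-\pt_{\varDelta_q}$ holds only off a polar set, but both $\pt_\mu$ and $-G$ are harmonic on $O\setminus S$, so equality off a polar set upgrades to equality everywhere there). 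Because $S\Subset O$ forces $(\RR^d\setminus S)\cup O=\RR^d$, a single sheaf-gluing step produces the global harmonic $\Phi$. This is more economical; the paper's detour through the Arsove representation buys nothing extra. One small point in your $d=1$ case: the asymptotic $\pt_\mu(x)=\mu(\RR)\,|x|+O(1)$ alone yields $\mu(\RR)=0$ and $\Phi\equiv c$, but not $c=0$; for that you need the \emph{exact} affine values $\pt_\mu(x)=\mp\int y\,d\mu(y)$ on the two outer rays, which is precisely the content of the paper's Lemma~\ref{lem2} and is presumably what your phrase ``comparing the behavior as $x\to\pm\infty$'' is meant to invoke.
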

\begin{proof} By Weyl's lemma on the Laplace equation, we have  
\begin{equation*}
\begin{cases}
\bigtriangleup(p-\pt_{\varDelta_p})\overset{\eqref{df:cm}}{=}\frac{1}{c_d}(\varDelta_p-\varDelta_p)=0\\
\bigtriangleup(q-\pt_{\varDelta_q})\overset{\eqref{df:cm}}{=}\frac{1}{c_d}(\varDelta_q-\varDelta_q)=0
\end{cases}\quad \Longrightarrow \quad 
\begin{cases}
h_p:=p-\pt_{\varDelta_p}\in \har(O)\\
h_q:=q-\pt_{\varDelta_q}\in \har(O)
\end{cases}
\end{equation*}
 and  obtain representations
\begin{equation}\label{hPQ}
\begin{cases}
p=\pt_{\varDelta_p}+h_p\\
q=\pt_{\varDelta_Q}+h_q
\end{cases}
\text{on $O$  with $h_p\in \har(O)$ and $h_q\in \har(O)$}.
\end{equation}

Let us first consider separately 

\paragraph{\bf The case $O:=\RR^d$ in the notation $P: = p$ and $Q: = q$.} 
Put 
\begin{equation}\label{hpq}
h\overset{\eqref{hPQ}}{:=}h_P-h_Q\in \har(\RR^d).
\end{equation}
By the conditions of Theorem \ref{lemPQ} and Proposition \ref{prO}, we have
\begin{multline}\label{bd}
h(x)\overset{\eqref{hpq}}{=}h_P(x)-h_Q(x)\overset{\eqref{hPQ}}{=}-\pt_{\varDelta_P}(x)+\pt_{\varDelta_Q}(x)+\bigl(P(x)-Q(x)\bigr)\\
\overset{\eqref{{pnu0}infty}}{=}
bk_{d-2}\bigl(|x|\bigr)
+O\bigl(|x|^{1-d}\bigr), \quad |x|\to +\infty, \quad\text{where  $b:=\varDelta_Q(\RR^d)-\varDelta_P(\RR^d)$.}
\end{multline}

\paragraph{The case $d>2$.} If $d\geq 3$, then, in view of \eqref{bd}, this harmonic function $h$ bounded 
on $\RR^d$. By Liouville's Theorem \cite[Ch.~3]{ABR}, $h$ is constant, and 
$h_P-h_Q=h\overset{\eqref{bd}}{\equiv} 0$ on $\RR^d$. 
In particular, $|b|=\bigl|b+|x|^{d-2}h(x)\bigr|\overset{\eqref{bd}}{=}
O\bigl(1/|x|\bigr) $ as $x\to \infty$, i.e., $b=0$. 
Thus, for   $H:=h_P=h_Q$,   
 by \eqref{hPQ}, we obtain representations \eqref{PQpR} together with  $\pt_{\varDelta_P}=\pt_{\varDelta_Q}$ on $\RR^d\setminus S$,  as required. 
  
\paragraph{The case $d=2$.} Using  \eqref{bd} we obtain
$\bigl|h(x)-b\log |x|\bigr|\overset{\eqref{bd}}{=}O\bigl(1/|x|\bigr)$ as $x\to \infty$. Hence, this harmonic function $h$ is bounded from below
 if $b\geq 0$ or  bounded from above if $b<0$. Therefore, by  Liouville's Theorem,  $h$ is constant, $b=0$, i.e., $\varDelta_P(\RR^2)\overset{\eqref{bd}}{=}\varDelta_Q(\RR^2)$, and $h
\overset{\eqref{bd}}{\equiv} 0$ on $\RR^2$. Thus, we obtain  \eqref{PQpR} together with \eqref{ptd}.  
 \paragraph{The case $d=1$.} Using  \eqref{bd} we obtain
$\bigl|h(x)-b|x|\bigr|\overset{\eqref{bd}}{=}O(1)$ as $x\to \infty$. Hence, this affine  function $h$
on $\RR$ is bounded from below  if $b\geq 0$ or bounded from above if $b<0$. 
Therefore, $h$ is constant, $b=0$, i.e., $\varDelta_P(\RR)\overset{\eqref{bd}}{=}\varDelta_Q(\RR)$, and 
$h\overset{\eqref{bd}}{\equiv} C$  on $\RR$ for a constant $C\in \RR$.  Thus, 
 \begin{equation}\label{hPQc}
\begin{cases}
P(x)=\pt_{\varDelta_P}(x)+ax+b+C\\
Q(x)=\pt_{\varDelta_Q}(x)+ax+b
\end{cases}
\text{for  $x\in \RR$  with $h_Q(x)\underset{\text{\tiny $x\in \RR$}}{\equiv} ax+b$},
\end{equation}  
The definition \eqref{pot}  of potentials in the case $d=1$  immediately implies 
\begin{lemma}\label{lem2} Let $\varDelta\in \Meas^+_{\comp}(\RR)$, and
$s_l:=\inf \supp \varDelta$, $s_r:=\sup \supp \varDelta$. 
Then 
\begin{equation*}
\pt_\varDelta(x)=
\begin{cases}
\varDelta(\RR)x-\int y\dd \varDelta(y)&\text{if $x\geq s_r$},\\
-\varDelta(\RR)x+\int y\dd \varDelta(y)&\text{if $x\leq s_l$}.
\end{cases}
\end{equation*}
\end{lemma}
We set \begin{equation*}
\begin{cases}
t:=\varDelta_P(\RR)=\varDelta_Q(\RR)\in \RR^+,\\ 
S_l:=\inf (S\cup \supp \varDelta_P\cup \supp \varDelta_Q)\in \RR,\\
S_r:=\sup (S\cup \supp \varDelta_P\cup \supp \varDelta_Q)\geq S_l.
\end{cases}
\end{equation*}
In view of $P(x)\equiv Q(x)$ for $x\in \RR\setminus S$,   by Lemma  \ref{lem2}, we have    
\begin{equation*}
\begin{cases}
tx-\int y\dd \varDelta_P(y)+ax+b+C=
tx-\int y\dd \varDelta_Q(y)+ax+b \quad\text{if $x\geq S_r$},\\
-tx+\int y\dd \varDelta_P(y)+ax+b+C=
-tx+\int y\dd \varDelta_Q(y)+ax+b \quad\text{if $x\leq S_l$},
\end{cases}
\end{equation*}
whence
\begin{equation*}
\begin{cases}
-\int y\dd \varDelta_P(y)+C=
-\int y\dd \varDelta_Q(y),\\
\int y\dd \varDelta_P(y)+C=
\int y\dd \varDelta_Q(y).
\end{cases}
\end{equation*}
Adding these equalities, we obtain $C=0$. Thus, we get 
 \eqref{PQpR} together with \eqref{ptd}.

\paragraph{\bf The general case of an open set  $O\subset \RR^d$.}
Let's start again with the representations \eqref{hPQ}. We set  
 \begin{subequations}\label{d}
\begin{align}
{\mathsf S}&\overset{\text{\tiny closed}}{:=}S\bigcup \supp \varDelta_q\bigcup \supp \varDelta_p\overset{\text{\tiny compact}}{\Subset} O,
\tag{\ref{d}S}\label{{d}S}\\
w:=p-q,&\quad  \varDelta_w\overset{\eqref{df:cm}}{:=}c_d\bigtriangleup\!w=\varDelta_p-\varDelta_q\in \Meas({\mathsf S})\subset
\Meas_{\comp}(O).
\tag{\ref{d}w}\label{{d}w}
\end{align}
\end{subequations}
This difference $w\in \sbh_*(O)-\sbh_*(O)$ of subharmonic functions, i.e., a $\delta$-subharmonic function   \cite{Arsove}, \cite{Arsove53p}, \cite[3.1]{KhaRoz18},  is uniquely defined on $O$ outside a polar set (cf. \eqref{Dom})
\begin{equation}\label{Domd}
\Dom w 
:=\left\{x\in O\colon
\inf\left\{ \int_{0}\frac{\varDelta_w^-(x,t)}{t^{d-1}} \dd t, \int_{0}\frac{\varDelta_w^+(x,t)}{t^{d-1}} \dd t\right\}<+ \infty 
\right\}\overset{\eqref{{d}S}}{\subset} {\mathsf S},
\end{equation}
and $w\equiv 0$ on $O\setminus {\mathsf S}$ since $p=q$ outside $S\subset {\mathsf S}$ in \eqref{{d}w}, and $p,q\in \har(O\setminus {\mathsf S})$.  The Riesz charge $\varDelta_w
\overset{\eqref{d}}{\in} \Meas_{\comp}(O)$ of this $\delta$-subharmonic function $w$ on $O$ is also uniquely determined on $O$ with $\supp |\varDelta_w|\subset {\mathsf S}$ \cite[Theorem 2]{Arsove}. 
The function $w\colon O\setminus \Dom w\to \overline \RR$ can be extended from $O$ to the whole of $\RR^d\setminus \Dom w$ 
 by zero values:
\begin{equation}\label{d0}
w\equiv 0 \quad\text{on }\RR^d\setminus {\mathsf S}\overset{\eqref{{d}S}}{\supset} \RR^d\setminus O, 
\quad \varDelta_w=\varDelta_p-\varDelta_q\overset{\eqref{{d}w}}{\in} \Meas({\mathsf S}). 
\end{equation}
This function $w$ on $\RR^d\setminus \Dom w$
 is still a $\delta$-subharmonic function, but already on $\RR^d$, since $\delta$-subharmonic functions are defined locally \cite[Theorem 3]{Arsove}. The 
Riesz charge of this  $\delta$-subharmonic function $w\colon \RR^d\setminus \Dom d \to \overline \RR$ on $\RR^d$ is the same charge
$\varDelta_d\overset{\eqref{{d}w}}{\in} \Meas({\mathsf S})$. There is a canonical representation  \cite[Definition 5]{Arsove} of $w$ such that \cite[Theorem 5]{Arsove}
\begin{subequations}\label{PQ}
\begin{align}
w=P-Q &\quad\text{on }\RR^d\setminus \Dom w, \quad\text{where }
P,Q\in \sbh_*(\RR^d)\cap\har(\RR^d\setminus {\mathsf S})
\tag{\ref{PQ}d}\label{{PQ}s}
\\
\intertext{are functions with Riesz measures} 
&\begin{cases}
\varDelta_P\overset{\eqref{df:cm}}{:=}c_d\bigtriangleup\!P=\varDelta_w^+
\overset{\eqref{{PQ}s}}{\in} \Meas^+({\sf S}),\\ 
\varDelta_Q\overset{\eqref{df:cm}}{:=}c_d\bigtriangleup\!Q=\varDelta_w^-\overset{\eqref{{PQ}s}}{\in} \Meas^+({\sf S}),
\end{cases}
\tag{\ref{PQ}$\varDelta$}\label{{PQ}D}
\\
&P\overset{\eqref{d0},\eqref{{PQ}s}}{\equiv}Q \quad \text{on }\RR^d\setminus {\mathsf S},
\tag{\ref{PQ}$\equiv$}\label{{PQ}d}
\\
\intertext{and there is a function $s\in \sbh_*(O)$ with Riesz measure}
&\varDelta_s=\varDelta_p-\varDelta_w^+\overset{\eqref{d0},\eqref{{PQ}D}}{=}\varDelta_q-\varDelta_w^- \in \Meas^+({\mathsf S})
\tag{\ref{PQ}s}\label{{PQ}vD}
\\
&\quad\text{such that } \begin{cases}
p=P+s,\\
q=Q+s
\end{cases}
\quad \text{on } O.
\tag{\ref{PQ}r}\label{{PQ}r}
\end{align}
\end{subequations}
By \eqref{{PQ}s} and \eqref{{PQ}d},  all conditions  of Theorem \ref{lemPQ} 
are fulfilled for functions $P,Q$ from \eqref{PQ} instead of $p,q$, but in the case  $\RR^d$ instead of $O$ and ${\mathsf S}$ instead of $S$.  Thus,  we have 
\eqref{ptd} in the form 
\begin{subequations}\label{dpt}
\begin{align}
\varDelta_w^+(O)\overset{\eqref{{PQ}D}}{=}\varDelta_P(\RR^d)
\overset{\eqref{ptd}}{=}\varDelta_Q(\RR^d)
\overset{\eqref{{PQ}D}}{=}\varDelta_w^-(O), 
\tag{\ref{dpt}$\varDelta$}\label{{dpt}d}
\\
\pt_{\varDelta_w^+}\overset{\eqref{{PQ}D}}{=}\pt_{\varDelta_P}=\pt_{\varDelta_Q}\overset{\eqref{{PQ}D}}{=}\pt_{\varDelta_w^-} \quad \text{on $\RR^d\setminus {\mathsf S}$},
\tag{\ref{dpt}p}\label{{dpt}pt}
\end{align}
\end{subequations}
and  the representations \eqref{PQpR} in the form
\begin{equation}\label{PQpR+}
\begin{cases}
P\overset{\eqref{PQpR}}{=}\pt_{\varDelta_P}+h
\overset{\eqref{{dpt}pt}}{=}\pt_{\varDelta_w^+}+h\\
Q\overset{\eqref{PQpR}}{=}\pt_{\varDelta_Q}+h
\overset{\eqref{{dpt}pt}}{=}\pt_{\varDelta_w^-}+h
\end{cases}
\quad \text{on $\RR^d$, \quad  $h\in \har(\RR^d)$}.
\end{equation}
Hence, by representation  \eqref{{PQ}r},  we obtain the following representations
\begin{equation}\label{rPQ}
\begin{split}
&\begin{cases}
p\overset{\eqref{{PQ}r},\eqref{PQpR+}}{=}\pt_{\varDelta_w^+}+h+s,\\
q\overset{\eqref{{PQ}r},\eqref{PQpR+}}{=}\pt_{\varDelta_w^-}+h+s
\end{cases}
\quad \text{on  $O$}, \\
h\in \har(\RR^d),\quad &\pt_{\varDelta_w^+}\overset{\eqref{{dpt}pt}}{=}\pt_{\varDelta_w^-}
\text{ on $\RR^d\setminus {\mathsf S}$,}\quad
\varDelta_w^+(O)\overset{\eqref{{dpt}d}}{=}\varDelta_w^-(O). 
 \end{split}
\end{equation} 
Besides,  the function  $l\overset{\eqref{{PQ}vD}}{:=}s-\pt_{\varDelta_s}$ 
is harmonic on $O$ by Weyl's lemma on the Laplace equation 
$\bigtriangleup\,(s-\pt_{\varDelta_s})\overset{\eqref{{PQ}vD}}{=}\varDelta_s-\varDelta_s=0$.  Hence 
\begin{equation}\label{rPQs}
\begin{split}
&\begin{cases}
p\overset{\eqref{rPQ}}{=}\pt_{\varDelta_w^+}+\pt_{\varDelta_s}+h+l,\\
q\overset{\eqref{rPQ}}{=}\pt_{\varDelta_w^-}+\pt_{\varDelta_s}+h+l
\end{cases}
\quad \text{on  $O$, where $h\in \har(\RR^d)$ and  $l\in \har(O)$,} 
\\
& \pt_{\varDelta_w^+}+\pt_{\varDelta_s}\overset{\eqref{rPQ}}{=}\pt_{\varDelta_w^-}+\pt_{\varDelta_s}\text{ on } \RR^d\setminus {\mathsf S},\quad 
\varDelta_w^+(O)\overset{\eqref{rPQ}}{=}\varDelta_w^-(O). 
\end{split}
\end{equation}
By construction, we have
\begin{equation*}
\begin{split}
&\begin{cases}
\pt_{\varDelta_w^+}+\pt_{\varDelta_s}=\pt_{\varDelta_w^++\varDelta_s}
\overset{\eqref{{PQ}vD}}{=}\pt_{\varDelta_p},
\\
\pt_{\varDelta_w^-}+\pt_{\varDelta_s}=\pt_{\varDelta_w^-+\varDelta_s}
\overset{\eqref{{PQ}vD}}{=}\pt_{\varDelta_q},
\end{cases} \\
&\varDelta_p(O)=(\varDelta_w^++\varDelta_s)(O)
\overset{\eqref{{PQ}vD}}{=}
(\varDelta_w^-+\varDelta_s)(O)=\varDelta_p(O).
\end{split}
\end{equation*}
Hence, if we set $H:=h+l\in \har(O)$, then, by \eqref{rPQs}, we obtain 
exactly \eqref{PQpR}, as well as \eqref{ptd}, with the only difference being that in \eqref{ptd} we have ${\mathsf S}\overset{\eqref{{d}S}}{\supset} S$ instead of $S$. 
Moreover, it immediately follows from the representation \eqref{PQpR} and the condition $p = q$ on ${\mathsf S}\setminus S\overset{\eqref{{d}S}}{\subset} O\setminus S$ that $\pt_{\varDelta_p}=\pt_{\varDelta_q}$ on $\RR^d\setminus S=
(\RR^d\setminus {\mathsf S})\bigcup ({\mathsf S}\setminus S)$.
%% Theorem \ref{lemPQ} is proved.
\end{proof}

%%%\section*{\large Acknowledgments} 

%%The authors thank the unknown referee...

 %%\grant{The research is funded in the framework of executing the development program of Scientific Educational Mathematical Center of Volga Federal District by %%additional agreement no. 075-02-2020-1421/1 to agreement no. 075-02-2020-1421.}%%The work was supported by a Grant of the Russian Science Foundation, project %%No.~18-11-00002.}

%%\paragraph{\bf Acknowledgements} 
%%The work was supported by a Grant of the Russian Science Foundation (Project %%No. 18-11-00002).

\def\bibname{\vspace*{-30mm}{\centerline{\normalsize References}}}

\vskip 1 cm \footnotesize
\begin{flushleft}
Bulat~N.~Khabibullin  \\ % name of the authors from the same institution
Institute of Mathematics with Computing Centre - Subdivision of the Ufa Federal Research Centre of the Russian Academy of Sciences\\ % name of the department, where author works
112, Chernyshevsky str., Ufa, Russia, 450008 \\ % buseness address 
E-mail: Khabib-Bulat@mail.ru 
\end{flushleft}

%%%\date01.12.2022}

\end{document}